\def\today{\ifcase \month \or
	January \or February \or March \or April \or
	May \or June \or July \or August \or
	September \or October \or November \or December \fi
	\space\number\day , \number\year}
\newcommand\@dotsep{4.5}
\def\@tocline#1#2#3#4#5#6#7{\relax
	\ifnum #1>\c@tocdepth % then omit
	\else
	\par \addpenalty\@secpenalty\addvspace{#2}%
	\begingroup \hyphenpenalty\@M
	\@ifempty{#4}{%
		\@tempdima\csname r@tocindent\number#1\endcsname\relax
	}{%
		\@tempdima#4\relax
	}%
	\parindent\z@ \leftskip#3\relax \advance\leftskip\@tempdima\relax
	\rightskip\@pnumwidth plus1em \parfillskip-\@pnumwidth
	#5\leavevmode\hskip-\@tempdima #6\relax
	\leaders\hbox{$\m@th
		\mkern \@dotsep mu\hbox{.}\mkern \@dotsep mu$}\hfill
	\hbox to\@pnumwidth{\@tocpagenum{#7}}\par
	\nobreak
	\endgroup
	\fi}
\begin{document}
	
	\makeatletter
	\@addtoreset{figure}{section}
	\def\thefigure{\thesection.\@arabic\c@figure}
	\def\fps@figure{h,t}
	\@addtoreset{table}{bsection}
	
	\def\thetable{\thesection.\@arabic\c@table}
	\def\fps@table{h, t}
	\@addtoreset{equation}{section}
	\def\theequation{%\thesection.
		\arabic{equation}}
	\makeatother
	
	\newcommand{\bfi}{\bfseries\itshape}
	\newtheorem{theorem}{Theorem}
	\newtheorem{corollary}[theorem]{Corollary}
	\newtheorem{criterion}[theorem]{Criterion}
	\newtheorem{definition}[theorem]{Definition}
	\newtheorem{example}[theorem]{Example}
	\newtheorem{lemma}[theorem]{Lemma}
	\newtheorem{notation}[theorem]{Notation}
	\newtheorem{problem}[theorem]{Problem}
	\newtheorem{proposition}[theorem]{Proposition}
	\newtheorem{remark}[theorem]{Remark}
	\numberwithin{theorem}{section}
	\numberwithin{equation}{section}

	%%% Todo
	\newcommand{\todo}[1]{\vspace{5 mm}\par \noindent
		\framebox{\begin{minipage}[c]{0.85 \textwidth}
				\tt #1 \end{minipage}}\vspace{5 mm}\par}

 %%%

 \renewcommand{\1}{{\bf 1}}

 \newcommand{\hotimes}{\widehat\otimes}

 \newcommand{\Ad}{{\rm Ad}}
 \newcommand{\ad}{{\rm ad}}
 \newcommand{\Alt}{{\rm Alt}\,}
 \newcommand{\Ci}{{\mathcal C}^\infty}
 \newcommand{\comp}{\circ}
 \newcommand{\wt}{\widetilde}

 \newcommand{\ph}{\text{\bf P}}
 \newcommand{\conv}{{\rm conv}}
 \newcommand{\de}{{\rm d}}
 \newcommand{\ee}{{\rm e}}
 \newcommand{\ev}{{\rm ev}}
 \newcommand{\fimes}{\mathop{\times}\limits}
 \newcommand{\id}{{\rm id}}
 \newcommand{\ie}{{\rm i}}
 \newcommand{\End}{{\rm End}\,}
 \newcommand{\Gr}{{\rm Gr}}
 \newcommand{\GL}{{\rm GL}}
 \newcommand{\Hilb}{{\bf Hilb}\,}
 \newcommand{\Hom}{{\rm Hom}}
 \renewcommand{\Im}{{\rm Im}}
 \newcommand{\Ker}{{\rm Ker}\,}
 \newcommand{\Lie}{\textbf{L}}
 \newcommand{\lf}{{\rm l}}
 \newcommand{\Loc}{{\rm Loc}\,}
 \newcommand{\pr}{{\rm pr}}
 \newcommand{\Ran}{{\rm Ran}\,}
 \renewcommand{\Re}{{\rm Re}}
 \newcommand{\supp}{{\rm supp}\,}

 %amenability
 \newcommand{\Cb}{{\mathcal C}_b}
 \newcommand{\UCb}{{\mathcal U}{\mathcal C}_b}
 \newcommand{\LUCb}{{\mathcal L}{\mathcal U}{\mathcal C}_b}
 \newcommand{\RUCb}{{\mathcal R}{\mathcal U}{\mathcal C}_b}

 \newcommand{\Tr}{{\rm Tr}\,}
 \newcommand{\Tran}{\textbf{Trans}}

 \newcommand{\CC}{{\mathbb C}}
 \newcommand{\NN}{{\mathbb N}}
 \newcommand{\RR}{{\mathbb R}}
 \newcommand{\TT}{{\mathbb T}}
 \newcommand{\ZZ}{{\mathbb Z}}

 \newcommand{\G}{{\rm G}}
 \newcommand{\U}{{\rm U}}
 \newcommand{\Gl}{{\rm GL}}
 \newcommand{\SL}{{\rm SL}}
 \newcommand{\SU}{{\rm SU}}
 \newcommand{\VB}{{\rm VB}}

 \newcommand{\Ac}{{\mathcal A}}
 \newcommand{\Bc}{{\mathcal B}}
 \newcommand{\Cc}{{\mathcal C}}
 \newcommand{\Dc}{{\mathcal D}}
 \newcommand{\Ec}{{\mathcal E}}
 \newcommand{\Fc}{{\mathcal F}}
 \newcommand{\Gc}{{\mathcal G}}
 \newcommand{\Hc}{{\mathcal H}}
 \newcommand{\Kc}{{\mathcal K}}
 \newcommand{\Nc}{{\mathcal N}}
 \newcommand{\Oc}{{\mathcal O}}
 \newcommand{\Pc}{{\mathcal P}}
 \newcommand{\Qc}{{\mathcal Q}}
 \newcommand{\Rc}{{\mathcal R}}
 \newcommand{\Sc}{{\mathcal S}}
 \newcommand{\Tc}{{\mathcal T}}
 \newcommand{\Uc}{{\mathcal U}}
 \newcommand{\Vc}{{\mathcal V}}
 \newcommand{\Wc}{{\mathcal W}}
 \newcommand{\Xc}{{\mathcal X}}
 \newcommand{\Yc}{{\mathcal Y}}
 \newcommand{\Zc}{{\mathcal Z}}
 \newcommand{\Ag}{{\mathfrak A}}
 \renewcommand{\gg}{{\mathfrak g}}
 \newcommand{\hg}{{\mathfrak h}}
 \newcommand{\mg}{{\mathfrak m}}
 \newcommand{\nng}{{\mathfrak n}}
 \newcommand{\pg}{{\mathfrak p}}
 \newcommand{\Gg}{{\mathfrak g}}
 \newcommand{\Lg}{{\mathfrak L}}
 \newcommand{\Mg}{{\mathfrak M}}
 \newcommand{\Sg}{{\mathfrak S}}
 \newcommand{\Ug}{{\mathfrak u}}
 \newcommand{\zg}{{\mathfrak z}}

\markboth{}{}

\makeatletter
\title
[Representation theory of pro-Lie groups]{Coadjoint orbits in representation theory\\ of pro-Lie groups}
\author{Daniel Belti\c t\u a}
\address{Institute of Mathematics ``Simion Stoilow'' of the Romanian Academy,
	P.O. Box 1-764, Bucharest, Romania}
\email{Daniel.Beltita@imar.ro, beltita@gmail.com}
\author{Amel Zergane}
\address{Higher Institute of Applied Sciences and Technology of Sousse, Mathematical Physics Laboratory, Special Functions and Applications,
	City Ibn Khaldoun 4003, Sousse, Tunisia}
\email{amel.zergane@yahoo.fr}
\thanks{The work of the first-named author was supported by a grant of the Romanian National Authority for Scientific Research and
	Innovation, CNCS--UEFISCDI, project number PN-II-RU-TE-2014-4-0370.}
\date{19 September 2017
	%\today.
	%\textbf{File name}: \texttt{BZ2\_arxiv\_update.tex}}
	%\dedicatory{}
}
\keywords{pro-Lie group, coadjoint orbit}
\subjclass[2010]{Primary 22A25; Secondary 22A10, 22D10, 22D25%53D20,
	%17B65
}
%\date{}
\makeatother

%%% ----------------------------------------------------------------------

\begin{abstract}
We present a one-to-one correspondence between equivalence classes of unitary irreducible representations and
coadjoint orbits
for a class of pro-Lie groups
including all connected locally compact nilpotent groups and
arbitrary infinite direct products of nilpotent Lie groups.
\end{abstract}

%%% ----------------------------------------------------------------------
\maketitle
%%% ----------------------------------------------------------------------
%\tableofcontents

\section{Introduction}

In this paper we sketch an approach to unitary representation theory for a class of projective limits of Lie groups, in the spirit of the method of coadjoint orbits from representation theory of Lie groups.
(See \cite{BZ17} for more details.)
The importance of this method stems from the fact that the groups under consideration here are not locally compact in general, hence they may not have a Haar measure, and therefore it is not possible to model their representation theory in the usual way, using Banach algebras or $C^*$-algebras.

By way of motivation, we discuss a simple example
(cf. \cite[Ex. 4.10]{BZ17}), which shows that
the usual $C^*$-algebraic approach to group representation theory does not work for topological groups which are not
locally compact.
Let $G=(\RR^\NN,+)$ be the abelian group which is the underlying additive group of the vector space of all sequences of real numbers.
Since the linear dual space $(\RR^\NN)^*=\RR^{(\NN)}$ is the vector space of all finitely supported sequences of real numbers,
%and the coadjoint orbits of $G$ can be identified with the points of $\Lie(G)^*$ since $G$ is an abelian group.
it easily follows
that there exists
a bijection $\Psi_G\colon\widehat{G}\to\RR^{(\NN)}$
(compare also Corollary~\ref{O4_cor2}).
Specifically, for every $\lambda=(\lambda_j)_{j\in\NN}\in\RR^{(\NN)}$,
$\Psi_G^{-1}(\lambda)\in\widehat{G}$ is the equivalence class of the 1-dimensional representation
$$\chi_\lambda\colon G\to \U(1),\quad
\chi_\lambda((x_j)_{j\in\NN}):=\exp(\ie \sum_{j\in\NN}\lambda_jx_j)$$
where $\U(1):=\{z\in\CC\mid\vert z\vert=1\}$.
However, as the vector space $\RR^{(\NN)}$ is infinite dimensional, it is not locally compact,
hence it is not homeomorphic to the spectrum of any $C^*$-algebra.
Consequently, the irreducible representation theory of $G$ cannot be exhaustively described via any $C^*$-algebra.

\section{Preliminaries}

 \subsection*{Lie theory}
We use upper case Roman letters to denote Lie groups, and their corresponding lower case Gothic letters to denote the Lie algebras.
We will also use the notation $\Lie$ for the Lie functor which associates to each Lie group its Lie algebra,
hence for any Lie group $G$ one has $\Lie(G)=\gg$.
We denote the exponential map of a Lie group $G$ by $\exp_G\colon\gg\to G$, and if this map is bijective, then we denote its inverse by $\log_G\colon\gg\to G$.
For any morphism of Lie groups $q\colon G\to H$, its corresponding morphism of Lie algebras is denoted by $\Lie(q)\colon\gg\to\hg$,
hence one has the commutative diagram
$$\begin{CD}
\gg @>{\Lie(q)}>> \hg \\
@V{\exp_G}VV @VV{\exp_H}V \\
G @>{q}>> H
\end{CD} $$
The coadjoint action of a Lie group is denoted by $\Ad_G^*\colon G\times\gg^*\to\gg^*$, and its corresponding set of coadjoint orbits is denoted by $\gg^*/G$ or $\Lie(G)^*/G$.
If $q\colon G\to H$ is a surjective morphism of Lie groups,
then one has a map $\Lie(q)^*: \hg^*\to\gg^*$
such that for every coadjoint $H$-orbit $\Oc\in\hg^*/H$
its image $\Lie(q)^*(\Oc)$ is a coadjoint $G$-orbit,
and one thus obtains a map
$$\Lie(q)^*_{\Ad^*}\colon\hg^*/H\to\gg^*/G,\quad \Oc\mapsto\Lie(q)^*(\Oc).$$

\subsection*{Representation theory}

For any topological group $G$ we denote by $\widehat{G}$ its unitary dual, that is, its set of unitary equivalence classes $[\pi]$ of unitary irreducible representations $\pi\colon G\to\Bc(\Hc)$.
If $q\colon G\to H$ is a continuous surjective morphism of topological groups, then we define $$\widehat{q}\colon\widehat{H}\to\widehat{G}, \quad [\pi]\mapsto [\pi\circ q].$$

\begin{proposition}\label{O3a}
	Let $G$ be any connected nilpotent Lie group with its universal covering $p\colon\widetilde{G}\to G$,
	and denote $\Gamma:=\Ker p\subseteq \widetilde{G}$.
	We define
	$$\gg^*_{\ZZ}:=\{\xi\in\gg^*\mid (\xi\circ\Lie(p)\circ \log_{\widetilde{G}})(\Gamma)\subseteq \ZZ\}.$$
	Then the folowing assertions hold:
	\begin{enumerate}
	\item The set $\Gamma$ is a discrete subgroup of the center of~$\widetilde{G}$.
	\item The set $\gg^*_{\ZZ}$ is invariant to the coadjoint action of $G$.
	\item
	There exists an injective correspondence $\Psi_G\colon\widehat{G}\to\gg^*/G$, whose image is exactly
	the set of all coadjoint $G$-orbits contained in $\gg^*_{\ZZ}$, such that
	if $H$ is any other connected nilpotent Lie group with a surjective morphism of Lie groups $q\colon G\to H$, then one has the commutative diagram
	$$\begin{CD}
	\widehat{G} @>{\Psi_G}>> \gg^*/G  \\
	@A{\widehat{q}}AA @AA{\Lie(q)^*_{\Ad^*}}A \\
	\widehat{H} @>{\Psi_H}>> \hg^*/H
	\end{CD}$$
\end{enumerate}
\end{proposition}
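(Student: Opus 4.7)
\medskip

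\noindent\emph{Proof sketch (proposal).}
The plan is to reduce the statement to the classical Kirillov correspondence for the simply connected nilpotent cover $\widetilde G$, then descend to $G=\widetilde G/\Gamma$.

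Part (1) is immediate from standard Lie theory: the kernel of a covering of connected Lie groups is always a discrete normal subgroup, and by a connectedness argument any such subgroup is central. Part (2) follows by noting that $\Lie(p)\colon\widetilde\gg\to\gg$ is a Lie algebra isomorphism intertwining the adjoint actions of $\widetilde G$ and $G$; since $\Gamma\subseteq Z(\widetilde G)$, the set $\Lie(p)\circ\log_{\widetilde G}(\Gamma)\subseteq\gg$ is pointwise fixed by $\Ad_G$, which forces $\Ad^*_G$-invariance of $\gg^*_\ZZ$.

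The substance is in (3). I would first apply Kirillov's theorem to the simply connected nilpotent group $\widetilde G$ to obtain a bijection $\Psi_{\widetilde G}\colon\widehat{\widetilde G}\to\widetilde\gg^*/\widetilde G$, and combine it with the standard fact that $\widehat p\colon\widehat G\to\widehat{\widetilde G}$ is injective with image exactly the classes of those $\tilde\pi$ that are trivial on $\Gamma$. The key technical input is that for any orbit $\widetilde\Oc\subseteq\widetilde\gg^*$, the Kirillov representation attached to $\widetilde\Oc$ evaluated on a central element $\exp_{\widetilde G}(X)$ with $X\in Z(\widetilde\gg)$ is the scalar operator $\exp(2\pi\ie\langle\widetilde\xi,X\rangle)\,I$ for any (hence every) $\widetilde\xi\in\widetilde\Oc$. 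Since $\log_{\widetilde G}(\Gamma)\subseteq Z(\widetilde\gg)$ by (1), the condition that this representation descend to $G$ translates, after transport by $\Lie(p)^*$, into exactly the integrality condition defining $\gg^*_\ZZ$. Finally, because $\Gamma$ is central, every $\widetilde G$-orbit on $\widetilde\gg^*$ is also a $G$-orbit, and $\Lie(p)^*$ identifies $\widetilde\gg^*/\widetilde G$ with $\gg^*/G$; composing the three bijections yields $\Psi_G$ with the stated image.

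For the naturality square, I would lift any surjection $q\colon G\to H$ to a morphism $\widetilde q\colon\widetilde G\to\widetilde H$ between simply connected covers (which exists by simple connectedness of $\widetilde G$), observe that $\widetilde q(\Ker p_G)\subseteq\Ker p_H$, and invoke the known functoriality of Kirillov's correspondence under surjective morphisms of simply connected nilpotent Lie groups. The main obstacle I anticipate is pinning down the integrality normalization (the factors of $2\pi$ and the precise central character formula for the Kirillov representation) and verifying that the descent through $\widetilde q$ is compatible with $\Lie(q)^*_{\Ad^*}$; once these are in place, the diagram collapses to a composition of the already established bijections.
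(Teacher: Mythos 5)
Your sketch is correct and follows the same route the paper relies on: the paper gives no argument of its own here, deferring entirely to \cite[Prop. A.3]{BZ17}, and that result is precisely the reduction to Kirillov's correspondence for the simply connected cover $\widetilde{G}$ followed by descent through the central discrete kernel $\Gamma$ and lifting of $q$ to the universal covers for the naturality square --- exactly your plan. The only loose end is the one you already flagged, namely the normalization: the integrality condition $(\xi\circ\Lie(p)\circ\log_{\widetilde{G}})(\Gamma)\subseteq\ZZ$ presupposes the central character convention $\tilde\pi\bigl(\exp_{\widetilde{G}}X\bigr)=\exp\bigl(2\pi\ie\,\langle\xi\circ\Lie(p),X\rangle\bigr)I$ for $X$ in the center of $\Lie(\widetilde{G})$, so one must fix that convention consistently throughout.
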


\begin{proof}
	See \cite[Prop. A.3]{BZ17}.
\end{proof}

\section{Pro-Lie groups and their Lie algebras}

The main results that we will give below (see Theorem~\ref{O4} and its corollaries) are applicable to pro-Lie groups and are stated in terms of Lie algebras and coadjoint orbits of these groups.
Therefore we discuss these notions in this section.
Our general reference for pro-Lie groups is the monograph \cite{HM07}, and we also refer to the paper \cite{HN09} for the relation between pro-Lie groups and infinite-dimensional Lie groups.

Any topological group in this paper is assumed to be Hausdorff by definition.
A Cauchy net in a topological group $G$ is a net $\{g_j\}_{j\in J}$ in $G$ with the property that for every neighborhood $V$ of $\1\in G$ there exists $j_V\in J$ such that for all $i,k\in J$ with $i\ge j_V$ and $k\ge j_V$ one has $g_ig_k^{-1}\in V$.
A topological group $G$ is called \emph{complete} if every Cauchy net in $G$ is convergent.
Every locally compact group is complete by \cite[Rem. 1.31]{HM07}.

For any topological group $G$ we denote by $\Nc(G)$ the set of its \emph{co-Lie subgroups}, that is, the closed normal subgroups $N\subseteq G$ for which $G/N$ is a finite-dimensional Lie group.
 We say that $G$ is a \emph{pro-Lie group} if it is complete and for every neighborhood $V$ of $\1\in G$ there exists $N\in\Nc(G)$ with $N\subseteq V$ (cf. \cite[Def. 3.25]{HM07}).
 If this is the case, then $\Nc(G)$ is closed under finite intersections, hence it is a filter basis (cf. \cite[page 148]{HM07}).

 Pro-Lie groups can be equivalently defined as the limits of projective systems of Lie groups, by \cite[Th. 3.39]{HM07}.

\begin{definition}%\label{grlcvx}
	\normalfont
	For any pro-Lie group $G$, its set of continuous 1-parameter subgroups 
	$$\Lie(G):=\{X\in\Cc(\RR,G)\mid
	(\forall t,s\in\RR)\quad X(t+s)=X(t)X(s)\}$$
	is endowed with its topology of uniform convergence on the compact subsets of~$\RR$.
	 Then
	the topological space $\Lie(G)$ has the structure of a locally convex Lie algebra over~$\RR$,
	whose scalar multiplication, vector addition and bracket
	satisfy the following conditions for all
	$t,s\in\RR$ and $X_1,X_2\in\Lie(G)$:
	\allowdisplaybreaks
	\begin{eqnarray}
	\nonumber
	(t\cdot X_1)(s) &=& X_1(ts);  \\
	\nonumber
	(X_1+X_2)(t) &=& \lim\limits_{n\to\infty}(X_1(t/n)X_2(t/n))^n;\\
	\nonumber
	[X_1,X_2](t^2) &=& \lim\limits_{n\to\infty}(X_1(t/n)X_2(t/n)X_1(-t/n)X_2(-t/n))^{n^2},
	\end{eqnarray}
	where the convergence is uniform on the compact subsets of~$\RR$.
	(See for instance \cite[Ex. 2.7(4.)]{BB11}.)
One also has the dual vector space
$$\Lie(G)^*:=\{\xi\colon\Lie(G)\to\RR\mid\xi\text{ is linear and continuous}\}$$
endowed with its locally convex topology of pointwise convergence on~$\Lie(G)$.
The \emph{adjoint action} is 
$\Ad_G\colon G\times\Lie(G)\to\Lie(G)$, $(g,X)\mapsto\Ad_G(g)X:=g X(\cdot)g^{-1} $,
and this defines by duality the \emph{coadjoint action}
$$\Ad_G^*\colon G\times\Lie(G)^*\to\Lie(G)^*,\quad (g,\xi)\mapsto\Ad_G^*(g)\xi:=\xi\circ \Ad_G(g^{-1}). $$
We denote by $\Lie(G)^*/G$ the set of all coadjoint orbits, that is, the orbits of the above coadjoint action.
\end{definition}
In the following proposition we summarize a few basic properties of Lie algebras of connected locally compact groups.
A pro-Lie group $G$ is called \emph{pronilpotent} if for every $N\in\Nc(G)$ the finite-dimensional Lie group $G/N$ is nilpotent.
(See \cite[Def. 10.12]{HM07}.)

\begin{proposition}\label{locpro}
	If $G$ is a connected locally compact group, then the following assertions hold:
	\begin{enumerate}
		\item\label{locpro_1} $G$
	is a pro-Lie group and its Lie algebra $\Lie(G)$ is the direct product of a finite-dimensional Lie algebra,
	an abelian (possibly infinite-dimensional) Lie algebra, and a (possibly infinite) product of simple compact Lie algebras.
	\item The following conditions are equivalent:
	\begin{enumerate}
		\item\label{locpro_a} The group $G$ is pronilpotent.
		\item\label{locpro_b} The Lie algebra $\Lie(G)$ is the product of a finite-dimensional nilpotent Lie algebra and an abelian (possibly infinite-dim\-ension\-al) Lie algebra.
		\item\label{locpro_c} The Lie algebra $\Lie(G)$ is nilpotent (possibly infinite-dim\-ension\-al).
		\item\label{locpro_d} The group $G$ is nilpotent.
	\end{enumerate}
\end{enumerate}
\end{proposition}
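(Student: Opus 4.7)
The plan is to divide the argument according to the statement. For part \eqref{locpro_1}, I would appeal to the structure theory of connected locally compact groups. The pro-Lie property follows from Yamabe's theorem in the pro-Lie formulation of \cite{HM07} (cf. \cite[Cor. 4.22]{HM07}), which asserts that every connected locally compact group is a projective limit of connected Lie groups and hence a pro-Lie group in the sense of the definition recalled earlier. The product decomposition of $\Lie(G)$ is the Lie-algebraic counterpart of the structure theorem for the underlying group: the finite-dimensional factor captures the maximal semisimple noncompact and nilpotent pieces, the abelian (possibly infinite-dimensional) factor corresponds to vector-like central directions forced by local compactness together with infinite-dimensionality, and the product of simple compact Lie algebras captures the residual compact semisimple part. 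I would simply quote the relevant structure statement from \cite[Ch.~12]{HM07}.

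For part \eqref{locpro_2} my plan is to close the cycle $\eqref{locpro_a}\Rightarrow\eqref{locpro_b}\Rightarrow\eqref{locpro_c}\Rightarrow\eqref{locpro_d}\Rightarrow\eqref{locpro_a}$. The implication $\eqref{locpro_b}\Rightarrow\eqref{locpro_c}$ is immediate, since a direct product of a (finite-dimensional) nilpotent Lie algebra with an abelian Lie algebra is nilpotent. For $\eqref{locpro_a}\Rightarrow\eqref{locpro_b}$, I would use that $\Lie$ is continuous with respect to projective limits on pro-Lie groups, so $\Lie(G)=\varprojlim_{N\in\Nc(G)}\Lie(G/N)$, with each $\Lie(G/N)$ a finite-dimensional nilpotent Lie algebra. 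Matching this against the decomposition from part \eqref{locpro_1}, the simple compact factors must be absent, as a nonzero simple compact Lie algebra cannot occur as a quotient of a pronilpotent limit, and the finite-dimensional factor is forced to be nilpotent.

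The remaining implications $\eqref{locpro_c}\Rightarrow\eqref{locpro_d}\Rightarrow\eqref{locpro_a}$ use the correspondence between Lie algebras and simply connected nilpotent Lie groups, combined with the exponential map of a pronilpotent pro-Lie group: if $\Lie(G)$ is nilpotent of some class $k$, then the Baker--Campbell--Hausdorff series is a finite polynomial, which lifts on the connected pro-Lie group $G$ to show that $G$ itself is nilpotent of class at most $k$. Conversely, if $G$ is nilpotent, then so is every finite-dimensional Lie group quotient $G/N$, which is exactly the pronilpotency condition \eqref{locpro_a}.

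The main obstacle is actually $\eqref{locpro_c}\Rightarrow\eqref{locpro_d}$: passing from a purely Lie-algebraic nilpotency hypothesis to nilpotency of the group requires the global surjectivity and good behavior of $\exp_G$ on the nilpotent pro-Lie group $G$, which is not automatic outside the finite-dimensional setting and must be justified through the product decomposition from \eqref{locpro_b} together with the fact that both factors (finite-dimensional simply connected nilpotent, and abelian pro-Lie) have bijective exponentials modulo the discrete center. Everything else is essentially a careful bookkeeping of the projective-limit structure supplied by \cite{HM07}.
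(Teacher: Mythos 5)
Your outline of part \eqref{locpro_1} and of the equivalences $\eqref{locpro_a}\iff\eqref{locpro_b}\iff\eqref{locpro_c}$ and $\eqref{locpro_d}\implies\eqref{locpro_a}$ matches the paper's argument, which handles these by quoting the structure theorem (via Glushkov and Lashof rather than \cite{HM07}, but that is immaterial) and the fact that $G$ is pronilpotent if and only if $\Lie(G)$ is pronilpotent (\cite[Th.~10.36]{HM07}). The genuine gap is in the step you yourself single out as the main obstacle: passing from nilpotency of $\Lie(G)$ to nilpotency of $G$. Your proposed justification rests on global surjectivity of $\exp_G$, or bijectivity ``modulo the discrete center''. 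This fails for connected locally compact groups: the $p$-adic solenoid is a compact connected abelian (hence pronilpotent) group whose Lie algebra is $\RR$ and whose exponential map has dense but \emph{proper} image (the image is the arc component of the identity, a proper dense subgroup), and there is no discrete central subgroup whose removal fixes this. So the Baker--Campbell--Hausdorff argument only controls the subgroup generated by $\exp_G(\Lie(G))$, not $G$ itself.

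What is missing is a density-plus-continuity step. The paper supplies it by proving $\eqref{locpro_b}\implies\eqref{locpro_d}$ via Lashof's universal morphism $\pi_G\colon\widetilde{G}\to G$, for which $\Lie(\pi_G)$ is an isomorphism of Lie algebras and the image of $\pi_G$ is dense in $G$ (\cite[Th.~6.6]{HM07}); condition \eqref{locpro_b} together with \cite[Th.~4.23]{La57} gives that $\widetilde{G}$ is nilpotent, and nilpotency then passes from the dense subgroup $\pi_G(\widetilde{G})$ to $G$ because the iterated commutator maps are continuous and vanish on a dense subset. You could repair your argument along the same lines by replacing ``$\exp_G$ is surjective'' with ``the subgroup generated by $\exp_G(\Lie(G))$ is dense in the connected pro-Lie group $G$'' and then invoking continuity of the commutator maps; but as written, the step $\eqref{locpro_c}\implies\eqref{locpro_d}$ is not justified.
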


\begin{proof}
	The first assertion follows by \cite[Th. 4]{Gl57} or \cite[Cor. 4.24]{La57}.
	See also \cite[Th. 2.1.2.2]{BCR81}.
	
	For the second assertion, we first recall from \cite[Th. 10.36 and Def. 7.42]{HM07} that the group $G$ is pronilpotent if and only if its Lie algebra $\Lie(G)$ is pronilpotent, that is, every finite-dimensional quotient algebra of $\Lie(G)$ is nilpotent.
	Therefore, in view of Assertion~\ref{locpro_1}, one has $$\eqref{locpro_a}\iff\eqref{locpro_b}\iff\eqref{locpro_c}.$$
	Moreover, one clearly has  $\eqref{locpro_d}\implies\eqref{locpro_a}$.
	
	We now prove $\eqref{locpro_b}\implies\eqref{locpro_d}$.
	To this end let $\pi_G\colon\widetilde{G}\to G$ be the universal morphism defined in \cite[Def. 4.20]{La57} and \cite[page 259]{HM07}.
	Then $\Lie(\pi_G)\colon\Lie(\widetilde{G})\to\Lie(G)$ is an isomorphism of Lie algebras and the image of $\pi_G$ is dense in $G$ by \cite[Th. 6.6 (i) and (iv)]{HM07}.
	It follows at once by condition \eqref{locpro_b} and \cite[Th. 4.23]{La57} that the group $\widetilde{G}$ is nilpotent.
	Then, as the image of $\pi_G$ is dense in $G$, we obtain~\eqref{locpro_d}, and this completes the proof.
\end{proof}

\section{Main results}

Theorem~\ref{O4} below provides an exhaustive description of the unitary dual of a class of topological groups that are not locally compact.
As we discussed in the introduction, unitary dual spaces of non-locally-compact groups in general cannot be described in terms of representation theory of $C^*$-algebras.

For the following definition we recall that if $X$ is an arbitrary nonempty set, then a \emph{filter basis} on $X$
is a nonempty set~$B$
whose elements are nonempty subsets of $X$ having the property that for any $X_1,X_2\in B$ there exists $X_0\in B$ with $X_0\subseteq X_1\cap X_2$.
If $X$ is moreover endowed with a topology, then one says that the filter basis $B$ \emph{converges} to a point $x_0\in X$ if for every neighborhood ~$V$ of $x_0$ there exists $X\in B$ with $X\subseteq V$.

\begin{example}
	\normalfont
	Here are some basic examples of filter bases. 
\begin{enumerate}
\item Every neighborhood basis at any point of a topological space is a filter basis converging to that point.
\item If $G$ is a group endowed with the discrete topology and $B$ is a set of subgroups
of~$G$ such that the trivial subgroup $G_0:=\{\1\}$
is an element of~$B$, then $B$ is a filter basis on $G$ converging to $\1\in G$ since for any $G_1,G_2\in B$ one has $G_0\subseteq G_1\cap G_2$ and on the other hand $G_0$ is contained in any neighborhood of $\1\in G$.
\item If $G$ is a topological group with the property that for every neighborhood $V$ of $\1\in G$ there exists a co-Lie subgroup $N\in\Nc(G)$ with $N\subseteq V$, then $\Nc(G)$ is a filter basis on $G$ converging to $\1\in G$ since in fact for every $N_1,N_2\in\Nc(G)$ one has $N_1\cap N_2\in\Nc(G)$.
(See \cite[page 148]{HM07}.)
In particular, this holds true for pro-Lie groups.
\end{enumerate}
\end{example}

\begin{definition}
\normalfont
An \emph{amenable filter basis} on a topological group~$G$
is a filter basis $\Nc\subseteq\Nc(G)$ converging to $\1\in G$ such that every topological group $N\in\Nc$ is amenable.
\end{definition}
	
\begin{example}
\normalfont
Here are two examples of amenable filter basis that are needed in
Corollaries \ref{O4_cor1}--\ref{O4_cor2}:
\begin{enumerate}
	\item If $G$ is a connected locally compact group,
	then $\Nc(G)$ is an amenable filter basis.
	In fact, every $N\in\Nc(G)$ is compact hence amenable,
	and on the other hand $\Nc(G)$ converges to $\1\in G$ by the theorem of Yamabe.
	(See for instance \cite[Th. 0.1.5]{BCR81}.)
	\item Let $\{G_j\}_{j\in J}$ be an infinite family of nilpotent Lie groups with their direct product topological group $G:=\prod\limits_{j\in J}G_j$.
	Denote by $\Nc$ the set of all subgroups of $G$ of the form $N_F:=\prod\limits_{j\in J}N_j$ associated to any finite subset $F\subseteq J$, with $N_j=\{\1\}\subseteq G_j$ if $j\in F$ and $N_j=G_j$ if $j\in J\setminus F$.
	It is clear that every $N_F$ of this form has the following properties: $N_F$ is a closed normal subgroup of $G$ that is isomorphic to $\prod\limits_{j\in J\setminus F}G_j$ hence $N_F$ is amenable by \cite[Prop. 3.8]{BZ17}, and moreover $G/N_F$ is isomorphic to $\prod\limits_{j\in F}G_j$, which is a Lie group since $F$ is a finite set, hence $N_F\in \Nc(G)$.
	For any finite subsets $F_1,F_2\subseteq J$ one clearly has
	$N_{F_1}\cap N_{F_2}=N_{F_1\cup F_2}$, where $F_1\cup F_2$ is again a finite subset of $J$, hence $\Nc$ is a filter basis on $G$.
	Moreover, by the definition of an infinite direct product of topologies, it follows that the filter basis $\Nc$ converges to $\1\in G$.
	Consequently, $\Nc$ is an amenable filter basis on~$G$.
\end{enumerate}
\end{example}
	
\begin{theorem}\label{O4}
	Let $G$ be a complete topological group with an amenable filter basis~$\Nc$ for which $G/N$ is a connected nilpotent Lie group for every $N\in\Nc$.
	Then there exists a well-defined bijective correspondence
	$$\Psi_G\colon \widehat{G}\to \Lie(G)^*/G,\quad [\pi]\mapsto\Oc^\pi$$
	between the equivalence classes of unitary irreducible representations of $G$ and the set of all coadjoint $G$-orbits contained in the $G$-invariant set
	$$\Lie(G)^*_{\ZZ}:=\{\xi\in\Lie(G)^*\mid (\exists N\in\Nc)(\exists \eta\in\Lie(G/N)^*_\ZZ)\quad \xi=\eta\circ\Lie(p_N)\}.$$
	Every unitary irreducible representation $\pi\colon G\to\Bc(\Hc)$ is thus associated to the coadjoint $G$-orbit $\Oc^\pi:=\Lie(p_N)^*(\Oc_0)\subseteq\Lie(G)^*_{\ZZ}$, where $N\in\Nc$ and $\Oc_0\subseteq \Lie(G/N)^*_{\ZZ}$
	is the coadjoint $(G/N)$-orbit associated with a unitary irreducible representation $\pi_0\colon G/N\to\Bc(\Hc)$
	satisfying $\pi_0\circ p_N=\pi$.
\end{theorem}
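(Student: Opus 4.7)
The proof hinges on a \emph{factorization lemma}: every irreducible unitary representation $\pi\colon G\to\Bc(\Hc)$ is trivial on some $N\in\Nc$, so $\pi$ descends to an irreducible representation $\pi_0$ of the connected nilpotent Lie group $G/N$, to which Proposition~\ref{O3a} applies. To prove the lemma, fix a unit vector $\xi\in\Hc$; by continuity of $\pi$ and convergence of $\Nc$ to $\1\in G$, one finds $N\in\Nc$ with $\|\pi(n)\xi-\xi\|<1/2$ for every $n\in N$. Since $N$ is amenable as a topological group, a left-invariant mean $m$ on $\Cb(N)$ defines an operator $P\in\Bc(\Hc)$ with $\|P\|\le 1$ via
\[\langle P\eta,\zeta\rangle:=m_n\bigl(\langle\pi(n)\eta,\zeta\rangle\bigr),\]
and $m$-invariance forces $\Ran P$ to lie in the subspace $\Hc^{N}$ of $\pi|_N$-invariant vectors. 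The bound on $\xi$ gives $\Re\langle P\xi,\xi\rangle>0$, so $\Hc^{N}\neq\{0\}$; since $N$ is normal in $G$, the subspace $\Hc^{N}$ is $\pi(G)$-invariant, and irreducibility of $\pi$ yields $\Hc^{N}=\Hc$, that is, $N\subseteq\ker\pi$.

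With the lemma in hand, set $\Psi_G([\pi]):=\Lie(p_N)^*(\Psi_{G/N}([\pi_0]))$ whenever $\pi=\pi_0\circ p_N$ with $N\in\Nc$. Well-definedness under the choice of $N$ follows from the filter basis property: for any two representatives $(N_1,\pi_1)$ and $(N_2,\pi_2)$, pick $N\in\Nc$ with $N\subseteq N_1\cap N_2$ and apply the commutative diagram of Proposition~\ref{O3a} to the natural surjections $G/N\to G/N_i$ to conclude that both representatives determine the same coadjoint $G$-orbit. Injectivity of $\Psi_G$ is then inherited from the injectivity of each $\Psi_{G/N}$ together with the injectivity of the linear map $\Lie(p_N)^*$, which is automatic because $\Lie(p_N)\colon\Lie(G)\to\Lie(G/N)$ is surjective. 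Surjectivity of $\Psi_G$ onto the coadjoint orbits contained in $\Lie(G)^*_{\ZZ}$ follows from the definition of $\Lie(G)^*_{\ZZ}$ combined with the Lie-group-level surjectivity supplied by Proposition~\ref{O3a}: any $\xi=\eta\circ\Lie(p_N)$ with $\eta\in\Lie(G/N)^*_{\ZZ}$ is the pullback of the orbit of some $[\pi_0]\in\widehat{G/N}$, and $\pi:=\pi_0\circ p_N$ then satisfies $\Psi_G([\pi])=\Ad_G^*(G)\xi$.

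The main obstacle is the factorization lemma; once that is established, the rest of the argument is diagram chasing with Proposition~\ref{O3a} combined with the standard injectivity of dual maps associated to surjective Lie-algebra morphisms. Amenability of the $N\in\Nc$ is used precisely to promote the continuous almost-invariance of a single vector $\xi$ to an honest $\pi|_N$-invariant vector through the averaging operator $P$, which is the mechanism that makes the theorem applicable to topological groups $G$ that are not locally compact, since amenability is required only as a property of $N$ as a topological group and does not presuppose the existence of a Haar measure.
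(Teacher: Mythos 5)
Your proposal is correct and follows essentially the same route as the proof the paper cites from [BZ17, Th.\ 4.6]: the factorization lemma obtained by averaging an almost-invariant vector with an invariant mean on an amenable $N\in\Nc$ (this is precisely where amenability enters in [BZ17]), followed by transporting the Kirillov correspondence of Proposition~\ref{O3a} along the quotients $G/N$ and checking compatibility via the filter-basis property. The only cosmetic point is that for a non-locally-compact $N$ the invariant mean should be taken on the bounded (right) uniformly continuous functions rather than on all of $\Cb(N)$; this suffices since the matrix coefficients $n\mapsto\langle\pi(n)\eta,\zeta\rangle$ lie in that class.
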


\begin{proof}
	See \cite[Th. 4.6]{BZ17}.
\end{proof}

In connection with the following corollary we note that the Lie algebras of connected locally compact nilpotent groups can be described as in Proposition~\ref{locpro}.

\begin{corollary}\label{O4_cor1}
	If $G$ is a connected locally compact nilpotent group,
	then there is a bijective correspondence
	$\Psi_G\colon \widehat{G}\to \Lie(G)^*/G$
	onto the set of all coadjoint $G$-orbits contained in
	a certain $G$-invariant subset $\Lie(G)^*_{\ZZ}\subseteq \Lie(G)$.
	For any filter basis $\Nc\subseteq\Nc(G)$
	converging to  the identity
	one has
	$$\Lie(G)^*_{\ZZ}:=\{\xi\in\Lie(G)^*\mid (\exists N\in\Nc)(\exists \eta\in\Lie(G/N)^*_\ZZ)\quad \xi=\eta\circ\Lie(p_N)\}.$$
\end{corollary}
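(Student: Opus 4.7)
The plan is to apply Theorem~\ref{O4} with the filter basis $\Nc=\Nc(G)$, and then verify that the resulting set $\Lie(G)^*_{\ZZ}$ admits the claimed description for any filter basis $\Nc\subseteq\Nc(G)$ converging to $\1\in G$. To apply Theorem~\ref{O4}: $G$ is complete as a locally compact group; by Proposition~\ref{locpro}(\ref{locpro_1}) it is pro-Lie, and the first case in the example of amenable filter bases preceding the theorem shows that $\Nc(G)$ is an amenable filter basis (each $N\in\Nc(G)$ is compact by the theorem of Yamabe, hence amenable, and $\Nc(G)$ converges to $\1$); moreover, for every $N\in\Nc(G)$, the quotient $G/N$ is a connected nilpotent Lie group, being finite-dimensional Lie by the definition of $\Nc(G)$, connected as the quotient of a connected group, and nilpotent as the quotient of a nilpotent group. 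Theorem~\ref{O4} thus yields the bijective correspondence $\Psi_G\colon\widehat{G}\to\Lie(G)^*/G$ onto the coadjoint orbits in the set $\Lie(G)^*_{\ZZ}$ defined in terms of $\Nc(G)$.

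It remains to show that this set coincides with the one defined using an arbitrary filter basis $\Nc\subseteq\Nc(G)$ converging to $\1$. Since $\Nc\subseteq\Nc(G)$, the set defined via $\Nc$ is automatically contained in the set defined via $\Nc(G)$. Conversely, suppose $\xi=\eta_0\circ\Lie(p_{N_0})$ with $N_0\in\Nc(G)$ and $\eta_0\in\Lie(G/N_0)^*_{\ZZ}$. The first step is to locate some $N\in\Nc$ with $N\subseteq N_0$, using the no-small-subgroups property of the finite-dimensional Lie group $G/N_0$: choose a neighborhood $V$ of $\1\in G/N_0$ containing no nontrivial subgroup; convergence of $\Nc$ to $\1\in G$ then supplies $N\in\Nc$ with $N\subseteq p_{N_0}^{-1}(V)$, so $p_{N_0}(N)\subseteq V$ is a subgroup of $G/N_0$ and must be trivial, giving $N\subseteq N_0$. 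Letting $q\colon G/N\to G/N_0$ be the induced surjective morphism of connected nilpotent Lie groups, we have $p_{N_0}=q\circ p_N$, whence $\xi=(\eta_0\circ\Lie(q))\circ\Lie(p_N)$; Proposition~\ref{O3a} applied to $q$ then ensures that $\Lie(q)^*$ carries $\Lie(G/N_0)^*_{\ZZ}$ into $\Lie(G/N)^*_{\ZZ}$, so $\eta_0\circ\Lie(q)\in\Lie(G/N)^*_{\ZZ}$ and the reverse inclusion holds.

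The main obstacle I anticipate is the passage from a factorization through some $N_0\in\Nc(G)$ to a factorization through some $N$ lying in the given filter basis $\Nc$, which requires translating the topological convergence of $\Nc$ to the identity into the set-theoretic containment $N\subseteq N_0$. The crucial tool making this translation possible is the no-small-subgroups property of finite-dimensional Lie groups applied to $G/N_0$; once $N\subseteq N_0$ is secured, the compatibility provided by the commutative diagram of Proposition~\ref{O3a} takes care of the rest.
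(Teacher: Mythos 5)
Your proposal is correct and follows the route the paper intends: the corollary is deduced from Theorem~\ref{O4} applied to the amenable filter basis $\Nc(G)$ (the paper itself only cites \cite[Cor.~4.7]{BZ17}, which is obtained there from the analogue of Theorem~\ref{O4} in exactly this way, and the nilpotency of $G$ guarantees that every $N\in\Nc(G)$ is amenable). Your reduction of an arbitrary filter basis $\Nc\subseteq\Nc(G)$ to $\Nc(G)$ --- locating $N\in\Nc$ with $N\subseteq N_0$ via the no-small-subgroups property of the Lie group $G/N_0$, and then transporting $\eta_0$ through $\Lie(q)^*$ using the compatibility in Proposition~\ref{O3a} --- is the standard cofinality argument and is sound.
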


\begin{proof}
		See \cite[Cor. 4.7]{BZ17}.
\end{proof}

We now draw a corollary of Theorem~\ref{O4} that applies to pro-Lie groups which are not locally compact.

\begin{corollary}\label{O4_cor2}
	If $\{G_j\}_{j\in J}$ is a family of connected nilpotent Lie groups,
	with their direct product topological group $G:=\prod\limits_{j\in J}G_j$,
	then there is a bijective correspondence
	$\Psi_G\colon \widehat{G}\to \Lie(G)^*/G$
	onto the set of all coadjoint $G$-orbits contained in the
	$G$-invariant subset  $\Lie(G)^*_{\ZZ}\subseteq \Lie(G)$.
	Here we define
	$$\Lie(G)^*_{\ZZ}:=\{\xi\in\Lie(G)^*\mid (\exists F\in\Fc)(\exists\eta\in\Lie(G_F)^*_{\ZZ})\quad \xi=\eta\circ\Lie(p_F)\}$$
	where $\Fc$ is the set of all finite subsets $F\subseteq J$,
	and for every $F\in\Fc$ we define $G_F:=\prod\limits_{j\in F}G_j$ and $p_F\colon G\to G_F$ is the natural projection.
\end{corollary}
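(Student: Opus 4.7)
The plan is to apply Theorem~\ref{O4} directly, with the amenable filter basis $\Nc:=\{N_F\mid F\in\Fc\}$ already constructed in the second item of the preceding example, where $N_F:=\prod_{j\in J\setminus F}G_j$ is identified with the kernel of the natural projection $p_F\colon G\to G_F=\prod_{j\in F}G_j$. The example shows that $\Nc$ is an amenable filter basis on $G$ converging to $\1$, so three things remain to check in order to invoke Theorem~\ref{O4}: that $G$ is complete, that for each $F\in\Fc$ the quotient $G/N_F$ is a connected nilpotent Lie group, and that the set $\Lie(G)^*_{\ZZ}$ described in the corollary agrees with the one produced by Theorem~\ref{O4}.

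First, completeness of $G$ follows from the fact that an arbitrary product of complete topological groups is complete in the product uniformity: a Cauchy net in $G$ projects to a Cauchy net in every factor $G_j$ (each $G_j$ is a Lie group, hence locally compact, hence complete by \cite[Rem.~1.31]{HM07}), and the tuple of limits in the factors yields a limit in the product. Second, for every $F\in\Fc$ the canonical isomorphism $G/N_F\cong G_F=\prod_{j\in F}G_j$ exhibits $G/N_F$ as a finite product of connected nilpotent Lie groups, and therefore as a connected nilpotent Lie group itself. Thus the hypotheses of Theorem~\ref{O4} are satisfied, which already gives the bijective correspondence $\Psi_G\colon\widehat{G}\to\Lie(G)^*/G$ onto the set of coadjoint orbits contained in the $G$-invariant subset provided by that theorem.

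It then remains to match the two formulas for $\Lie(G)^*_{\ZZ}$. The one supplied by Theorem~\ref{O4} ranges over $N\in\Nc$, i.e.\ over $N=N_F$ for $F\in\Fc$, with the functional $\eta$ ranging over $\Lie(G/N_F)^*_{\ZZ}$; under the isomorphism $G/N_F\cong G_F$ and its Lie-algebra counterpart, $\Lie(G/N_F)^*_{\ZZ}$ is carried onto $\Lie(G_F)^*_{\ZZ}$, and the quotient morphism $p_{N_F}\colon G\to G/N_F$ corresponds to $p_F\colon G\to G_F$. Hence $\xi=\eta\circ\Lie(p_{N_F})$ is precisely $\xi=\eta\circ\Lie(p_F)$ with $\eta\in\Lie(G_F)^*_{\ZZ}$, which is exactly the set displayed in the statement.

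The only step that is not purely formal is the completeness of the infinite direct product $G$, and even that is standard; the substantive content of the result is entirely packaged inside Theorem~\ref{O4}, so the corollary is really just a translation of that theorem into the direct-product setting, using the amenable filter basis already identified.
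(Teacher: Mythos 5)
Your proof is correct and follows exactly the route the paper intends: the paper's proof is only a citation to \cite[Cor.~4.9]{BZ17}, but the preceding example explicitly constructs the amenable filter basis $\{N_F\}_{F\in\Fc}$ ``needed in Corollaries \ref{O4_cor1}--\ref{O4_cor2}'', and your verification of completeness, of the identification $G/N_F\cong G_F$, and of the matching of the two descriptions of $\Lie(G)^*_{\ZZ}$ supplies precisely the remaining details for applying Theorem~\ref{O4}. No gaps.
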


\begin{proof}
	See \cite[Cor. 4.9]{BZ17}.
\end{proof}

\begin{remark}\label{18Sept2017}
\normalfont 
The amenability hypotheses of Theorem~\ref{O4} 
may actually be removed, using some results of \cite{Ne10}. 
\end{remark}

%\section{Examples}

% ------------------------------------------------------------------------

\subsection*{Acknowledgment} 
We are grateful to Karl-Hermann Neeb for Remark~\ref{18Sept2017}.

% ------------------------------------------------------------------------
\end{document}